\numberwithin{equation}{section}
\theoremstyle{plain}
\newtheorem{thm}{Theorem}[section]
\newtheorem{theorem}[thm]{Theorem}
\newtheorem{lemma}[thm]{Lemma}
\newtheorem{corollary}[thm]{Corollary}
\newtheorem{proposition}[thm]{Proposition}
\theoremstyle{definition}
\newtheorem{remark}[thm]{Remark}
\newtheorem{definition}[thm]{Definition}
\newtheorem{example}[thm]{Example}
\newtheorem{defn-thm}[thm]{Definition-Theorem}
\newtheorem{conjecture}[thm]{Conjecture}
\theoremstyle{remark}
\newtheorem{rem}[thm]{Remark}
\newcommand{\sO}{{\mathcal O}}
\renewcommand{\P}{{\mathbb P}}
\newcommand{\Q}{{\mathbb Q}}
\newcommand{\Sym}{{ Sym}}
\newcommand{\ds}{\oplus}
\newcommand{\ts}{\otimes}
\newcommand{\btheorem}{\begin{theorem}}
\newcommand{\etheorem}{\end{theorem}}
\newcommand{\bproposition}{\begin{proposition}}
\newcommand{\eproposition}{\end{proposition}}
\newcommand{\bdefinition}{\begin{definition}}
\newcommand{\edefinition}{\end{definition}}
\newcommand{\bcorollary}{\begin{corollary}}
\newcommand{\ecorollary}{\end{corollary}}
\newcommand{\bproof}{\begin{proof}}
\newcommand{\eproof}{\end{proof}}
\newcommand{\bremark}{\begin{remark}}
\newcommand{\eremark}{\end{remark}}
\newcommand{\eexample}{\end{example}}
\newcommand{\bexample}{\begin{example}}
\newcommand{\elemma}{\end{lemma}}
\newcommand{\blemma}{\begin{lemma}}
\renewcommand{\phi}{\varphi}
\newcommand{\ee}{\end{eqnarray*}}
\newcommand{\be}{\begin{eqnarray*}}
\newcommand{\beq}{\begin{equation}}
\newcommand{\eeq}{\end{equation}}
\newcommand{\bd}{\begin{enumerate}}
\newcommand{\ed}{\end{enumerate}}
\renewcommand{\tilde}{\widetilde}
\renewcommand{\>}{\rightarrow}
\begin{document}
\title{{\color{blue}{On projective varieties with strictly nef tangent bundles}}}
\makeatletter
\let\uppercasenonmath\@gobble
\let\MakeUppercase\relax
\let\scshape\relax
\makeatother
\author{ Duo Li, Wenhao Ou and Xiaokui Yang}

\address{{Duo Li, Yau Mathematical Sciences Center, Tsinghua University, Beijing, 100084, P. R. China.}}
\email{\href{mailto:duoli@math.tsinghua.edu.cn}{{duoli@math.tsinghua.edu.cn}}}

\address{Wenhao Ou, UCLA Mathematics Department,
520 Portola Plaza, Los Angeles, CA 90095, USA}
\email{\href{mailto:wenhaoou@math.ucla.edu}{wenhaoou@math.ucla.edu}}

\address{{Xiaokui Yang, Morningside Center of Mathematics, Institute of
        Mathematics, HCMS, CEMS, NCNIS, HLM, UCAS, Academy of Mathematics and Systems Science, Chinese Academy of Sciences, Beijing 100190, China.}}
\email{\href{mailto:xkyang@amss.ac.cn}{{xkyang@amss.ac.cn}}}
\maketitle

\begin{abstract}
In this paper, we study smooth complex projective varieties $X$ such that some exterior power $\bigwedge^r T_X$ of the tangent bundle is strictly nef.  We prove that such varieties are
rationally connected. We also classify the following two cases. If  $T_X$ is strictly nef, then $X$ isomorphic to the projective space $\P^n$. If $\bigwedge^2 T_X$ is strictly nef and if $X$ has dimension at least $3$, then $X$ is either isomorphic to $\P^n$ or a  quadric $\Q^n$.
\end{abstract}

\setcounter{tocdepth}{1} \tableofcontents

\section{Introduction}

\noindent  Throughout this paper, we will study projective varieties defined over $\mathbb{C}$, the field of complex numbers.   We recall that a  line
bundle $L$ over a smooth projective variety $X$ is said to be \emph{strictly nef} if there is  $$L\cdot
C>0$$ for any  curve $C\subset X$, while the
Nakai-Moishezon-Kleiman criterion asserts that  $L$ is
\emph{ample} if and only if there is $$ L^{\dim Y}\cdot Y>0 $$ for every
positive-dimensional  subvariety $Y\subset X$. In particular, ampleness implies strict nefness. However, the converse is not true in general, as shown in an example of Mumford (see \cite[Section 10, Chapter I]{Har70}).  Nevertheless, one might expect more for the canonical bundle $\omega_X$ of $X$. Indeed, on the one hand, since a strictly nef  semi-ample line bundle must be ample,  the abundance conjecture  suggests that if $\omega_X$ is strictly nef, then it should be ample.  On the other hand, Campana and Peternell proposed in
\cite[Problem ~11.4]{CP91} the following conjecture.

\begin{conjecture} \label{c} Let $X$ be a smooth   projective
variety. If $\omega_X^{-1}$ is strictly nef, then $\omega_X^{-1}$ is ample, that is, $X$
is a Fano variety.
\end{conjecture}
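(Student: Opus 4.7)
The goal is to prove ampleness of $\omega_X^{-1}$ in three stages: rational connectedness of $X$, bigness of $-K_X$, and semi-ampleness; the strict nef hypothesis then upgrades semi-ampleness to ampleness.

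First I would show that $X$ is uniruled. Since $-K_X$ is strictly nef, for a general complete intersection curve $C = H_1 \cap \cdots \cap H_{n-1}$ cut out by sufficiently ample divisors, we have $-K_X \cdot C > 0$, i.e.\ $K_X \cdot H_1 \cdots H_{n-1} < 0$. The class of $C$ is movable, so by the Boucksom-Demailly-Paun-Peternell characterization of the pseudo-effective cone $K_X$ is not pseudo-effective, and Miyaoka-Mori-BDPP give uniruledness. To upgrade uniruledness to rational connectedness I would pass to the MRC fibration $\pi\colon X \dashrightarrow Y$. A general fiber is rationally connected; by Graber-Harris-Starr, $Y$ is not uniruled, so $K_Y$ is pseudo-effective. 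If $\dim Y > 0$, I would attempt a contradiction by exhibiting a horizontal curve over a general one in $Y$ on which $-K_X$ is non-positive, or by pulling back a suitable pseudo-effective class from $Y$ and pairing with a movable class that witnesses strict positivity of $-K_X$.

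Next I would prove that $(-K_X)^n > 0$. Nefness already gives $(-K_X)^n \geq 0$. On a rationally connected $X$ one has $h^0(X, mK_X)=0$ for $m \geq 1$ and $h^i(X, \OO_X)=0$ for $i>0$; Riemann-Roch shows that the leading coefficient of $\chi(X, -mK_X)$ equals $(-K_X)^n/n!$, and combined with asymptotic vanishing for nef divisors, bigness of $-K_X$ becomes equivalent to strict positivity of this top self-intersection. Ruling out the boundary case $(-K_X)^n=0$ is the crux. One candidate strategy is to use Demailly's holomorphic Morse inequalities together with Miyaoka-Yau type characteristic class inequalities to produce a lower bound for $(-K_X)^n$. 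Once $-K_X$ is nef and big, the base-point-free theorem of Kawamata-Shokurov gives semi-ampleness of $-K_X$; the associated Iitaka fibration can then contract no curve, because any contracted curve would have $-K_X$-degree zero, contradicting strict nefness. Hence the fibration is an isomorphism and $-K_X$ is ample.

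The main obstacle I anticipate is proving bigness. The strict nef hypothesis is purely numerical at the level of $1$-cycles and does not directly yield sections of $-mK_X$; this is essentially the anti-canonical case of Serrano's conjecture and remains open in general. A reasonable fallback, which I expect is pursued in the subsequent sections of the paper, is to assume additional positivity---strict nefness of $T_X$ or of $\bigwedge^r T_X$---which supplies enough rational curves on $X$ to control the Mori cone directly, and then to analyze the possible extremal contractions one by one, ruling out each non-ample possibility by the strictly positive intersection of $-K_X$ with any curve contracted by the putative bad ray.
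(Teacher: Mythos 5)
The statement you are asked to prove is labelled a \emph{conjecture} in the paper (the Campana--Peternell conjecture), and the paper contains no proof of it: the authors explicitly state that it is known only for surfaces (Maeda) and threefolds (Serrano), and that their Theorem \ref{main11} merely ``provides some evidence'' by establishing rational connectedness of $X$. So there is no proof in the paper against which to match your argument, and your proposal, as you yourself acknowledge in the final paragraphs, does not close the essential gap: producing bigness of $-K_X$ from strict nefness. Your reduction is sound in outline --- if one knew $-K_X$ were nef and big, the base-point-free theorem would give semi-ampleness, and a strictly nef semi-ample line bundle is ample --- but the step $(-K_X)^n>0$ is exactly where the conjecture is open, and neither Riemann--Roch on a rationally connected variety nor holomorphic Morse inequalities is known to supply the needed strict positivity. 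Your proposal is therefore a correct diagnosis of the difficulty rather than a proof.

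Two smaller remarks on the part that \emph{is} in the paper. Your stage one (rational connectedness) is the paper's Theorem \ref{main11} in the case $r=n$, but the paper's route is quite different from your MRC sketch: it first kills the augmented irregularity using Theorem \ref{thm-section2} (a section $\sigma$ of the Albanese map with $\sigma^*\omega_X$ numerically trivial) together with Lemma \ref{flat} (a strictly nef bundle on a variety with $0\leqslant\kappa<\dim$ cannot have numerically trivial determinant), and then invokes the Cao--H\"oring decomposition of the universal cover as $Y\times F$ with $\omega_Y$ trivial, eliminating the factor $Y$ by Lemma \ref{lem-product-non-snef}. Your alternative --- contradicting $\dim Y>0$ for the MRC base by ``exhibiting a horizontal curve on which $-K_X$ is non-positive'' --- is not worked out and is not obviously available, since strict nefness gives no control over horizontal curves a priori; so even the rational connectedness stage of your plan has a gap as written. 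Your closing fallback (that the paper assumes strict nefness of $T_X$ or $\bigwedge^rT_X$ and classifies via extremal rays) correctly anticipates the actual content of Theorems \ref{main2} and \ref{main3}.
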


\noindent This conjecture is only verified by Maeda for surfaces
(see \cite{Mae93}) and by Serrano  for threefolds (see \cite{Ser95},
also \cite{Ueh00} and \cite{CCP06}). In this paper, we prove the following theorem, which provides some evidence for this conjecture in all dimensions.

\btheorem\label{main11} Let $X$ be a smooth  projective variety of
dimension $n$, and let $T_X$ be its tangent bundle. If $\bigwedge^r T_X$ is strictly nef for some
$1\leqslant r \leqslant n$, then $X$ is rationally connected.  In
particular, if $\omega_X^{-1}$ is strictly nef, then $X$ is rationally
connected. \etheorem

\noindent We recall that a vector bundle $E$ is said to be strictly nef
  if the tautological line bundle $\sO_E(1)$ on the
 projective bundle $\mathrm{Proj}\, E$ of hyperplanes is strictly nef.

  One of the key ingredients for the proof of Theorem
\ref{main11} relies on the recent breakthrough of   Cao and  H\"{o}ring on the structure
theorems for projective varieties with nef anticanonical bundle (see \cite{Cao16} and  \cite{CH17}).
Actually, based on their work, we prove the following result, which is   essential for Theorem \ref{main11}.

\begin{thm}
\label{thm-section2} Let $X$ be a smooth projective  variety with nef anticanonical bundle $\omega_X^{-1}$. Then, up to replacing $X$ with some finite \'etale cover if necessary,   the Albanese morphism $f:X\to A$ has a section $\sigma:A\to X$ such that $\sigma^*\omega_X$ is numerically trivial.
\end{thm}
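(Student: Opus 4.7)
The plan is to combine Cao--H\"oring's structure theorem for varieties with nef anticanonical class with a Borel fixed-point argument on the Albanese monodromy, and then to read off numerical triviality from the structure of $\mathrm{Pic}^0$ of the Albanese.

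\medskip

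\noindent\textbf{Step 1 (structure theorem).} I first appeal to \cite{Cao16} and \cite{CH17}. After replacing $X$ by a suitable finite \'etale cover, the Albanese morphism $f\colon X\to A$ becomes a \emph{locally constant fibration}: writing $\tilde A \cong \C^q$ for the universal cover of $A$, there exist a rationally connected projective manifold $F$ with $-K_F$ nef and a monodromy representation $\rho\colon \pi_1(A)\to \Aut(F)$ such that $X \cong (\tilde A \times F)/\pi_1(A)$, where $\pi_1(A)$ acts diagonally (by deck transformations on $\tilde A$ and via $\rho$ on $F$). Under this identification, a section $\sigma\colon A\to X$ is the same datum as a $\pi_1(A)$-equivariant holomorphic map $s\colon \tilde A\to F$.

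\medskip

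\noindent\textbf{Step 2 (constructing a section).} Set $G:=\rho(\pi_1(A))$, a finitely generated abelian subgroup of $\Aut(F)$. Since every morphism from an abelian variety to the rationally connected $F$ is constant, the Chevalley decomposition of $\Aut^0(F)$ contains no abelian variety factor, so $\Aut^0(F)$ is a linear algebraic group. Let $\bar G\subset \Aut(F)$ denote the Zariski closure of $G$, with identity component $\bar G^0$; then $\bar G^0$ is a connected abelian, hence solvable, affine algebraic group. Borel's fixed point theorem applied to $\bar G^0$ acting on the projective variety $F$ yields $F^{\bar G^0}\neq \emptyset$. Using that $\Aut(F)$ is of finite type when $F$ is rationally connected with $-K_F$ nef, the component group $\bar G/\bar G^0$ is finite, so $\rho^{-1}(\bar G^0)\subset \pi_1(A)$ has finite index. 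Passing to the corresponding further \'etale cover $A'\to A$ and setting $X':=X\times_A A'$, we may assume $G\subset \bar G^0$. Then every $p\in F^{\bar G^0}$ is fixed by $G$, and the constant map $s\equiv p$ is $\pi_1(A')$-equivariant and descends to a section $\sigma\colon A'\to X'$.

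\medskip

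\noindent\textbf{Step 3 (numerical triviality).} Since $\omega_{A'}$ is trivial and $f\circ\sigma=\id_{A'}$, we have $\sigma^*\omega_{X'}=\sigma^*\omega_{X'/A'}$. Pulling back to $\tilde A$, the section lifts to $\tilde\sigma(a)=(a,p)$, so $\tilde\sigma^*\omega_{X'/A'}$ is the trivial line bundle $\tilde A \times (\omega_F)_p$ on $\tilde A\cong\C^q$. The $\pi_1(A')$-equivariant structure acts on the one-dimensional fiber $(\omega_F)_p$ through a character $\chi\colon \pi_1(A')\to \C^*$, namely $\gamma\mapsto$ the scalar action of $\rho(\gamma)$ on $(\omega_F)_p$. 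Descending to $A'$, $\sigma^*\omega_{X'}$ is the line bundle associated to $\chi$, hence an element of $\mathrm{Pic}^0(A')$---which coincides with the group of numerically trivial line bundles on the abelian variety $A'$.

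\medskip

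\noindent The principal obstacle is Step 2, namely ensuring that after a further finite \'etale cover the monodromy lies in the connected part $\bar G^0$ of its Zariski closure. This requires finiteness of the component group $\bar G/\bar G^0$, which in turn uses that $\Aut(F)$ is of finite type for $F$ rationally connected with $-K_F$ nef---a positivity input that is essential to the argument. Once this is in place, the Borel fixed point step and the numerical triviality calculation are formal.
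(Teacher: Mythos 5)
Your overall strategy---reduce via Cao--H\"oring to a locally constant fibration $X\cong(\widetilde A\times F)/\pi_1(A)$, find a point of $F$ fixed by a finite-index subgroup of the monodromy, and read off numerical triviality of $\sigma^*\omega_{X/A}$ from flatness along the resulting section---is the same as the paper's, and your Step 3 is correct (and a bit slicker than the paper's, which instead deduces flatness of the quotients $Q_a\cong\sigma^*(\omega_{X/A}^{-a}\otimes L)^{r_a}$ for all $a$). The gap is in Step 2, which is precisely the point the paper devotes Theorem \ref{thm-periodic-points} and Proposition \ref{prop-periodic-points} to. First, the fibre $F$ of the Albanese morphism is \emph{not} rationally connected in general: it is merely a projective manifold with nef anticanonical bundle (for $X=A\times S$ with $S$ a K3 surface, the Albanese fibre is $S$). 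So the premise from which you derive both the linearity of $\Aut^0(F)$ and the finiteness of $\bar G/\bar G^0$ fails at the outset. Second, even granting rational connectedness, the claim that $\Aut(F)$ is of finite type whenever $F$ is rationally connected with $-K_F$ nef is false: a rational elliptic surface (the blow-up of $\P^2$ in the nine base points of a cubic pencil) has nef anticanonical bundle and an infinite discrete automorphism group coming from Mordell--Weil translations. If $\bar G$ has infinitely many components, no finite-index subgroup of $G$ lands in $\bar G^0$, and Borel's fixed point theorem gives you nothing.

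The paper sidesteps all of this: Theorem \ref{thm-periodic-points} shows by an elementary induction (project away from an eigenvector of one generator, then intersect the line through that eigenvector and a periodic point of the projected image) that \emph{any} finitely generated abelian group acting on a projective scheme with a $G$-equivariant ample line bundle has a periodic point, with no hypothesis on the structure of $\Aut(F)$. Your route can be repaired, but not with the inputs you cite: you would need (i) Cao's reduction making the fibres of $f$ simply connected, so that $q(F)=0$ and hence $\Aut^0(F)$ is linear by Chevalley's theorem, and (ii) the Fujiki--Lieberman theorem, which says that the stabilizer in $\Aut(F)$ of the ample class $[H]$ (a group containing the monodromy, since $H$ is $G$-equivariant) meets $\Aut^0(F)$ in finite index. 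With those two substitutions Borel's theorem does apply and yields a genuine alternative to the paper's combinatorial argument; as written, however, Step 2 does not go through.
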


\noindent  As an application of Theorem \ref{main11}, we prove the following  analogue of Mori's  characterization of projective spaces (see \cite{Mor79}).
 \btheorem\label{main2} Let
$X$ be a smooth projective variety of dimension $n$. If $T_X$ is
 strictly nef, then  $X$ is $ \P^n$. \etheorem

\noindent In another word,  this theorem states that the tangent bundle    $T_X$ is
strictly nef if and only if  it is ample. Along the same lines as
Theorem \ref{main2}, we obtain the following characterization  as well.

\btheorem\label{main3} Let $X$ be a smooth projective variety of
 dimension $n\geq 3$. Suppose that $\bigwedge^2T_X$ is strictly nef,
then $X$ is isomorphic to $\P^n$ or a quadric $\Q^n$.
 \etheorem

\noindent There are also  other   characterizations of  projective spaces or quadrics,  see, for example,
\cite{KO73}, \cite{SY80}, \cite{Siu80}, \cite{Mok88}, \cite{YZ90},
\cite{Pet90, Pet91}, \cite{CS95}, \cite{AW01}, \cite{CMSB02},  \cite{Miy04}, \cite{ADK08},
\cite{Hwa13}, \cite{DH} , \cite{Yang17}, etc.
 \vskip 0.4\baselineskip

\noindent \textbf{Acknowledgements.}  The authors  would like to thank  Shing-Tung Yau for his comments and suggestions on an earlier version of this paper
which clarify and improve the presentations. The authors would also
like to thank Yifei Chen, Yi Gu  and   Xiaotao Sun  for some useful
discussions.
The first author is very grateful to  Baohua Fu for his support, encouragement and stimulating discussions over the last few years.
The second author would like to thank Meng Chen, Zhi Jiang and Burt Totaro  for fruitful conversations.
The third author wishes to thank  Kefeng Liu, Valentino Tosatti and Xiangyu Zhou for  helpful discussions.
They would also like to thank the referee for pointing out a more general version of  Lemma \ref{flat}.
This work was partially supported by China's Recruitment
 Program of Global Experts and  NSFC 11688101.

\vskip 2\baselineskip

\section{Basic properties of   strictly nef vector
bundles}\label{basic}

\noindent In this section, we collect some basic results on  strictly nef vector bundles. The following proposition
is analogous to the Barton-Kleiman criterion for nef vector bundles
(see for example \cite[Proposition~6.1.18]{Laz04II}).

\bproposition\label{cri0} Let $E$ be a vector bundle on a smooth projective variety $X$. Then the following conditions are equivalent.
\bd
\item $E$ is  strictly nef.

\item For any    smooth projective curve $C$ with a  finite morphism $\nu:C\>X$,  and for any line bundle quotient $\nu^*(E)\to L$, one has $$ \deg \, L>0.  $$ \ed

\eproposition

\bproof

For a fixed smooth projective curve $C$, we know that  any non-constant morphism $\mu:C\to \mathrm{Proj}\, E$ whose image is horizontal over $X$ corresponds one-to-one to a finite morphism $\nu:C\to X$ with a  line bundle quotient $\nu^*E\to  L$. Moreover, we  have the following  commutative diagram

\beq \xymatrix{
  C \ar[rr]^{\mu} \ar[dr]_{\nu}
                &  &    \mathrm{Proj}\, E \ar[dl]^{}    \\
                & X                }
\label{diag}\eeq
such that  $L=\mu^* \sO_E(1) $, where  $\sO_E(1)$ is the tautological line bundle of $\mathrm{Proj}\, E$ (see for exmaple \cite[Proposition  II.7.12]{Har}).

$(1)\implies (2).$ Let $\nu:C\>X$ be a finite morphism, where $C$ is a smooth projective curve.  Assume that there is a line bundle quotient  $\nu^*E\to  L$.  Let $\mu:C\> \mathrm{Proj}\, E$ be the induced morphism as in diagram  (\ref{diag}). Then we have   $$ \deg\, L = \deg\, \mu^*\sO_E(1) = \sO_E(1) \cdot \mu(C).$$ Since  $\sO_E(1)$ is strictly nef, we obtain that $\deg\, L >0$.

$(2)\implies (1).$  Let $B$ be a  curve in $\mathrm{Proj}\, E$. Let  $f:  \widetilde{B} \to \mathrm{Proj}\, E$ be its normalization.  If $B$ is vertical over  $X$, then  we have $\sO_E(1)\cdot B>0$. If $B$ is horizontal over $X$, then the natural morphism  $g: \widetilde{B} \to X$ is  finite.  In this case,  there is an induced  line bundle quotient $g^*E\to Q$ such that $Q \cong f^*\sO_E(1)$. We note that $$\sO_E(1)\cdot B = \deg\, Q,$$ which is positive by hypothesis. Thus $E$ is strictly nef.  \eproof

\noindent We also have the following  list of  properties of strictly nef vector bundles.

\bproposition\label{directsum} Let $E$ and $F$ be two
vector bundles on a smooth projective variety $X.$ Then we have the following assertions.
\bd
\item $E$ is a  strictly nef if and only if for every smooth projective curve
$C$ and for any non-constant morphism $f: C\> X$,    $f^*E$ is strictly nef.

 \item If $E$ is  strictly nef, then any non-zero quotient bundle $Q$ of $E$ is  strictly nef.

\item  If $E\ds F$  is  strictly nef, then both $E$ and $F$ are
 strictly nef.

\item If the symmetric power $\Sym^k\, E$ is  strictly nef for some $k\geq 1$, then $E$ is  strictly nef.

\item Let  $f:Y\>X$ be  a  finite morphism such that  $Y$ is a  smooth projective variety. If $E$ is strictly nef, then so  is $f^*E$.

\item Let  $f:Y\>X$ be  a  surjective morphism such that  $Y$ is a  smooth projective variety. If $f^*E$ is strictly nef, then  $E$ is  strictly nef.

 \item If $E$ is strictly nef, then $h^0(X,E^*\ts L)=0$  for any numerically trivial line bundle $L$.
 \ed\eproposition

\begin{proof}
(1) follows directly from Proposition \ref{cri0}. For (2), we note that there is a natural embedding $\iota: \mathrm{Proj}\, Q \to \mathrm{Proj}\, E$ such that $\iota^*\sO_{E}(1) \cong \sO_{Q}(1)$. Hence if $E$ is strictly nef, then so is $Q$. (3) follows from $(2)$. For (4), we note that there is a Veronese embedding $v: \mathrm{Proj}\, E \to \mathrm{Proj}\, (\Sym^k\, E)$ such that  $v^*\sO_{\Sym^k\, E}(1) \cong \sO_{E}(k)$. This implies (4).

For (5), we notice that for any smooth projective curve $C$ with a finite morphism $\nu: C\to X$, the composition $f\circ \nu : C\> X$ is also finite. The assertion then follows from Proposition \ref{cri0}.

Now we consider (6). We note that for every curve in $X$, there is a curve in $Y$ which maps onto it. Hence by (1), we only need to prove the case when $X$ and $Y$ are smooth curves.  In this case, there is a natural finite surjective morphism $g:\mathrm{Proj}\, (f^*E) \to \mathrm{Proj}\, E$ induced by $f$. Moreover, we have $\sO_{f^*E}(1) \cong g^*\sO_E(1)$. By assumption, $\sO_{f^*E}(1)$ is a strictly nef line bundle. Since $g$ is finite surjective, this implies that $\sO_E(1)$ is also strictly nef. Therefore, $E$ is strictly nef.

It remains to prove (7). We remark that  $E\ts L^{-1}$ is still strictly nef for  $L^{-1}$ is numerically trivial. Thus, by replacing $E$ with $E\ts L^{-1}$, we may assume that $L$ is trivial. Assume by contradiction that $h^0(X, E^*)>0$. By \cite[Proposition~1.16]{DPS94},   there exists a nowhere vanishing section $\sigma\in H^0(X,  E^*)$. Then $\sigma$ induces a subbundle $\sO_X\> E^*$ as well as  a quotient bundle  $  E\>\sO_X.$ This contradicts Proposition \ref{cri0}.
\end{proof}

\vskip 1\baselineskip

\section{Strictly nef bundles on curves}\label{ellitpic}

\noindent In this section, we will look at  strictly nef vector bundle  $E$ on a smooth projective  curve $C$.  If $C$ is rational, then   $E$ is a direct sum of line bundles.   Hence $E$ is   strictly nef if and only if $E$ is ample in this case.  However, on a smooth curve $C$ of genus at least $2$, there exists a strictly nef vector bundle $E$ which is also a Hermitian flat stable vector bundle   (see \cite[Section 10 in Chapter I]{Har70}).  In particular, this  bundle $E$ is   not ample. Now it remains to look at the case when $C$ is elliptic. We observe the following fact.

\btheorem \label{main5} Let $E$ be a vector bundle on an elliptic
curve $C$. If  $E$ is strictly nef, then $E$ is ample. \etheorem

\noindent For the proof of this theorem, we will first prove the following lemma. Recall that a vector bundle $E$ is called numerically
flat  if both $E$ and $E^*$ are nef, or equivalently, if both $E$ and
$\det\, (E^*)$ are nef (see \cite[Definition~1.17]{DPS94}).

\blemma\label{flat} Let $E$  be a strictly nef vector bundle
on smooth projective variety $X$ whose Kodaira dimension $\kappa (X)$ satisfies $0 \leqslant \kappa (X)<\dim X$. Then  $\det\, E$ is not numerically trivial. \elemma

\bproof  We will first prove the case when the Kodaira dimension of $X$ is $0$.  Assume by contradiction that $\det \, E$ is numerically trivial. Then   $E$ is numerically flat, and so is $E^*$. By \cite[Theorem~1.18]{DPS94},  $E^*$ admits a filtration $$0=E_0\subset E_1\subset \cdots \subset E_p=E^*$$ of subbundles such that the quotients $E_k/E_{k-1}$ are Hermitian flat. In particular, $E_1$ is Hermitian flat, and is defined by a unitary representation of the fundamental group $\pi_1(X)$.  After \cite[Corollary 1]{Zuo96}, by replacing $X$ by some finite \'etale cover if necessary,  such a representation splits into a direct sum of one-dimensional representations. Hence $E_1$ is a direct sum of flat line bundles. Let $Q$ be one of them.   Then there is a   line bundle quotient $E\>L  $ with $L=Q^{-1}$. Moreover, since $L$ is also flat, it is numerically trivial.  This contradicts Proposition \ref{cri0}.

Now we study the general case. Let $\phi: X \dashrightarrow Y$ be the Iitaka fibration for $\omega_X$. Let $F$ be the closure of a general fiber of $\phi$. Then the Kodaira dimension of $F$ is $0$. Moreover,  $F$ has positive dimension as $\kappa (X)<\dim X$. By Proposition \ref{directsum}, we see that $E|_F$ is again strictly nef. Then from the first paragraph, the restriction of  $\det\, E$ on $F$  is not numerically trivial. Thus $\det\, E$ is not numerically trivial.
\eproof

\noindent Now we can conclude Theorem \ref{main5}.

 \begin{proof}[{Proof of Theorem \ref{main5}}]   The vector bundle $E$ can be decomposed as  $E=\ds E_i$ so that each $E_i$ is an indecomposable bundle. By Proposition  \ref{directsum},
  each $E_i$ is strictly nef.  Hence we have $\deg\, (E_i)>0$ by Lemma \ref{flat}. This implies that $E$ is ample (see  \cite[Theorem 1.3]{Har71} or \cite[Theorem 2.3]{Gie71}).
\end{proof}


\begin{rem}  From Proposition \ref{directsum} and Theorem \ref{main5}, we can obtain that if a  projective  variety $X$ contains the image of an elliptic curve (or a rational curve) which is not a point,
 then the determinant of every strictly nef vector bundle $E$ on $X$ is not numerically
 trivial. As a consequence, if $E$ is strictly nef over a projective variety $X$ with pseudo-effective
 $\omega^{-1}_X$, then $\det E$ is not numerically trivial. Indeed,  if $\omega_X$  is  not pseudo-effective, then   by \cite[Corollary~0.3]{BDPP13},  $X$ is covered by rational curves. If both $\omega_X$   and $\omega^{-1}_X$ are pseudo-effective, then $\omega_X$ is numerically trivial. Then the Kodaira dimension of $X$ is zero by  Beauville's decomposition theorem (see \cite[Th\'eor\`eme 1]{Bea83}), and we can apply Lemma \ref{flat} to conclude.
 \end{rem}

\section{Sections of Albanese morphisms}

\noindent In this section, we shall prove  Theorem \ref{thm-section2}. We will divide this section into two parts. In the first one, we will prove a theorem on periodic points for group actions on projective schemes. By using this theorem, we will conclude Theorem \ref{thm-section2}.

\subsection{Periodic points for linear actions of abelian groups}

The goal of this subsection is to prove the following theorem.

\begin{thm}
\label{thm-periodic-points} Let $G$ be a finitely generated abelian
group. Assume that $G$ acts on a projective scheme $Z$ such that
there is a $G$-equivariant ample line bundle $L$ on $Z$. Then the
action of $G$ on $Z$ has a periodic point.
\end{thm}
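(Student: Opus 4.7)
The plan is to linearize the $G$-action via the equivariant ample line bundle, replace $G$ by its Zariski closure in the ambient general linear group (which will be a commutative linear algebraic group $H$), apply Borel's fixed point theorem to the identity component $H^0$, and then combine this with commutativity of $H$ and finiteness of $H/H^0$ to produce a point with finite $G$-orbit.

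First, after replacing $L$ with a sufficiently high tensor power (still $G$-equivariant), I may assume $L$ is very ample and that the natural map $Z \hookrightarrow \P(V^*)$ is a closed $G$-equivariant immersion, where $V = H^0(Z, L)$. The $G$-linearization of $L$ produces a linear representation $\rho : G \to \GL(V)$ under which $Z$ is preserved. Let $H \subset \GL(V)$ be the Zariski closure of $\rho(G)$. Since $\rho(G)$ is abelian and the commutator map is a morphism, $H$ is a commutative linear algebraic group over $\C$; moreover the stabilizer of the closed subscheme $Z$ is a closed subgroup scheme of $\GL(V)$ containing $\rho(G)$, hence containing $H$, so $H$ acts on $Z$.

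Let $H^0$ be the identity component, so $H/H^0$ is finite. Being connected, $H^0$ preserves each irreducible component of $Z_{\text{red}}$, and being connected and commutative it is solvable, so Borel's fixed point theorem applied to one such component yields a point $z \in Z$ fixed by $H^0$. The fixed locus $Z^{H^0}$ is closed and, by commutativity of $H$, stable under all of $H$: for $g \in H$ and $h \in H^0$ one has $h \cdot (g \cdot z) = g \cdot (h \cdot z) = g \cdot z$. The finite group $H/H^0$ then acts on $Z^{H^0}$, so every point there has finite $H$-orbit and in particular finite $G$-orbit, which is the desired periodic point. The main technical point I expect to be careful about is verifying that the Zariski closure $H$ genuinely preserves the (possibly reducible or non-reduced) subscheme $Z$ and that Borel's theorem, classically phrased for complete varieties, can be invoked in this scheme-theoretic context; I handle both by realizing the stabilizer of $Z$ as a closed subgroup scheme of $\GL(V)$ and by passing to an irreducible component of $Z_{\text{red}}$, which is automatically preserved by the connected group $H^0$.
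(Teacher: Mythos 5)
Your argument is correct, but it takes a genuinely different route from the paper's. The paper, after the same linearization step (reducing to a very ample $L$ and the induced action on $\P(V)$ with $V=H^0(Z,L)$), proves Proposition \ref{prop-periodic-points} by a completely elementary double induction: for cyclic $G$ (Lemma \ref{lem-rank-one}) it inducts on $\dim \P(V)$, choosing an eigenvector of the generator, projecting away from it, finding a periodic point in the image by induction, and noting that the fibre of the projection through that point meets $Z$ in a finite $G$-stable set; for general $G$ it then inducts on the rank, replacing $Z$ by the fixed locus of the first generator. You instead pass to the Zariski closure $H$ of the image of $G$ in $\GL(V)$, a commutative linear algebraic group, and apply Borel's fixed point theorem to $H^0$ acting on an irreducible component of $Z_{\mathrm{red}}$, then use commutativity and the finiteness of $H/H^0$ to bound the orbit. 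This is shorter and more conceptual, at the cost of importing the theory of algebraic groups; the two points you flag as delicate are indeed the right ones and are handled correctly (for the stabilizer one can also argue directly that $\{g: g\cdot I_d\subseteq I_d\}$ is closed for each graded piece $I_d$ of the ideal of $Z$, so the closed submonoid $\bigcap_d\{g: g\cdot I_d\subseteq I_d\}$ contains the group $H$, which then genuinely stabilizes $Z$). Two small dividends of your approach: it never uses that $G$ is finitely generated, since the closure of any abelian subgroup of $\GL(V)$ has only finitely many components; and the passage from ``finite $G$-orbit'' to the paper's definition of a periodic point (a uniform exponent $k$) is immediate because the stabilizer of such a point has finite index in $G$.
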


\noindent We recall that if $G$ acts on a projective scheme $Z$, then a line bundle $L$ on $Z$ is said to be $G$-equivariant if there is an isomorphism $g^*L\cong L$ for any element $g\in G$ which is compatible with the group structure of $G$. A (closed) point
$z\in Z$ is said to be a periodic point for an element $g \in G$ if $g^k.z=z$ for some positive integer $k$. A point $z$ is called a periodic point for the action of $G$ if there is a positive integer $k$ such that $g^k.z=z$ for all elements $g\in G$.

\begin{proof}[{Proof of Theorem \ref{thm-periodic-points}}]
By replacing $L$ with some positive power of it if necessary, we can
assume that $L$ is very ample.  Then there is a nature linear action of  $G$   on   $W=H^0(Z, L)$, which induces a natural action of $G$ on $\mathrm{Proj}\, W$.  Moreover, there is a
$G$-equivariant embedding $Z\to \mathrm{Proj}\, W$. The theorem is then
equivalent to the following proposition.
\end{proof}

\bproposition \label{prop-periodic-points} Let $G$ be a finitely
generated abelian group. Let  $V$ be a linear representation
of $G$. Assume  that $Z\subseteq \P(V)$ is a   closed subscheme which is
stable under the induced action of $G$ on $\P(V)$, where $\P(V)$ is the  projective  space of lines in $V$. Then the  action  of $G$ on $Z$ has a periodic point. \eproposition

\noindent  We will first prove Proposition \ref{prop-periodic-points}
in the case when   $G$ is generated by one element.

\begin{lemma}
\label{lem-rank-one} With the notations in Proposition
\ref{prop-periodic-points}, if we assume that $G$ is generated by one element, then the action of $G$ on $Z$ has a periodic point.
\end{lemma}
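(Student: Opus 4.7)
The plan is to reduce the existence of a periodic point to a fixed point problem for a connected solvable algebraic group, and then invoke Borel's fixed point theorem. Let $g\in G$ be a generator, and let $A\in \GL(V)$ be the linear automorphism by which $g$ acts on $V$. Let
$$ H\ :=\ \overline{\langle A\rangle}^{\,\mathrm{Zar}}\ \subseteq\ \GL(V)$$
be the Zariski closure of the cyclic subgroup generated by $A$. Since the multiplication map on $\GL(V)$ is continuous and $\langle A\rangle$ is commutative, $H$ is a commutative closed subgroup of $\GL(V)$, i.e., a commutative linear algebraic group.

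Next I would check that $Z$ is $H$-stable. The stabilizer $\mathrm{Stab}(Z):=\{B\in \GL(V):B(Z)=Z\}$ is a Zariski closed subgroup of $\GL(V)$ containing $A$ (by the $G$-invariance hypothesis), hence it contains $H$. Thus every element of $H$ preserves $Z$. Let $H^{\circ}$ denote the identity component of $H$. Then $H^{\circ}$ is a connected commutative, and in particular solvable, linear algebraic group acting on the projective (hence complete) scheme $Z$.

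By Borel's fixed point theorem, $H^{\circ}$ admits a fixed point $z\in Z$. Since $H$ has only finitely many connected components, the image of $A$ in the finite group $H/H^{\circ}$ has finite order; say $A^{k}\in H^{\circ}$ for some $k\geqslant 1$. Then $A^{k}\cdot z=z$, so $z$ is a periodic point for the $G$-action on $Z$, as required.

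The proof is essentially a one-step application of Borel, so there is no serious obstacle. The only point that requires a little care is the passage from the cyclic group $\langle A\rangle$ (which is not algebraic in general) to the algebraic hull $H$: one must verify both that $H$ is still commutative (and hence solvable), so that Borel applies to $H^{\circ}$, and that $Z$ remains invariant under the full $H$ rather than only under the original generator $A$. Both follow formally from taking Zariski closures of the relevant subsets of $\GL(V)$.
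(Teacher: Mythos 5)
Your proof is correct, but it takes a genuinely different route from the paper. The paper argues by induction on $\dim \P(V)$: it chooses coordinates so that $(1,0,\dots,0)$ is an eigenvector of $g$; if the corresponding point $y\in\P(V)$ lies in $Z$ it is fixed, and otherwise the linear projection away from $y$ maps $Z$ $G$-equivariantly onto a closed subscheme of a hyperplane, where the inductive hypothesis produces a periodic point $z'$; the line through $y$ and $z'$ then meets $Z$ in a non-empty finite $g$-stable set, all of whose points are periodic. Your argument instead passes to the Zariski closure $H=\overline{\langle A\rangle}$ in $\GL(V)$, notes that $H$ is a commutative algebraic group stabilizing $Z$, applies Borel's fixed point theorem to the connected solvable group $H^{\circ}$ acting on the complete scheme $Z$ (implicitly assumed non-empty, as in the paper), and uses finiteness of $H/H^{\circ}$ to promote the $H^{\circ}$-fixed point to a periodic point for $A$. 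All the steps you flag as needing care (commutativity of the closure, $H$-stability of $Z$ via closedness of the stabilizer) are standard and correct. The trade-off: the paper's proof is elementary and self-contained, using only linear algebra and projective geometry, whereas yours invokes a substantial black box. In exchange, your method is shorter and in fact proves the full Proposition \ref{prop-periodic-points} in one stroke: taking $H$ to be the Zariski closure of the image of the entire finitely generated abelian group $G$ still yields a commutative algebraic group, Borel gives a point $z$ fixed by $H^{\circ}$, and $k=[H:H^{\circ}]$ satisfies $g^{k}.z=z$ for every $g\in G$ simultaneously, eliminating the paper's second induction on the rank of $G$.
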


\begin{proof}
Assume that  $G$  is generated by  an element $g$. Then it is enough
to prove that some positive power of $g$ has  a periodic point in
$Z$. Hence during the proof, we will replace $g$ with some positive
power of it if necessary.    Let  $(x_0,...,x_m)$ be a coordinates
system  of $V$ such that the  vector with coordinates $(1,0,...,0)$
in $V$ is an eigenvector for $g$.   Let $[x_0:\cdots \cdot :x_m]$ be
the induced homogeneous coordinates system of $\P(V)$

We will prove the lemma by induction on the dimension $m$ of
$\P(V)$. If $m=1$, then $Z$ is either a finite set or  the whole
$\P(V)$. If $Z$  is a finite set, then all of its points are
periodic. If $Z=\P(V)$, then the point $[1:0]$ belongs to $Z$ and
is fixed by $g$. Hence the lemma is true in this case.

Assume that the lemma is true in dimensions smaller than $m\geqslant
2$. If the point $y$ with coordinates $[1:0:\cdots :  0]$ is in $Z$,
then it is a fixed-point for $g$ and we are done. Assume that $y$
does not belong to $Z$. Let $H\subseteq \mathbb{P}(V)$ be the
hyperplane of points whose $0$-th coordinates are  $0$.
 Then the rational projection $\phi: \P(V) \dashrightarrow H$ such that
 $$\phi ([x_0:  \cdots : x_m]) = [0: x_1:\cdots:x_m]$$   is a well-defined morphism on $Z$.
We also note that $\phi|_Z$ is proper, and hence the image
$Z'=\phi(Z)$ is  a closed subscheme of $H \cong \P^{m-1}$. Since
$(1,0,...,0)\in V$ is an eigenvector for $g$, the action of $G$ on
$\P(V)$ descends naturally to an action of $G$ on $H$ and the
rational projection $\phi$ is $G$-equivariant.  Thus $Z'$ is stable
under the action of $G$ on $H$.  By induction hypothesis, there is a
point $z'\in Z'$ which is a periodic point for the action of $G$. By
replacing $g$ with some positive power if necessary, we may assume
that $z'$ is a fixed-point. Let $L\subseteq \mathbb{P}(V)$ be the
line joining $y$ and $z'$. Then $L\cap Z$ is  non-empty  and stable
under the action of $G$.  Since we have assumed that $y\notin Z$,
the intersection $L\cap Z$ is a proper subset of $L$. Thus it is
a finite set.  In particular, every point in $Z\cap L$ is a periodic
point for $g$. This completes the proof.
\end{proof}

\noindent Now we can conclude  Proposition \ref{prop-periodic-points}.

\begin{proof}[{Proof of Proposition  \ref{prop-periodic-points}}]
We first note that it is enough to prove that some subgroup of $G$
of finite index has a periodic point. Hence during the proof we may
replace $G$ by some subgroup of finite index of it if necessary. In particular, we may assume that $G$ is torsion-free.
Moreover, without loss of generality, we may assume that the
representation $V$ of $G$ is faithful.  We will prove by induction
on the rank of $G$. If the rank is one, then the theorem follows
from Lemma \ref{lem-rank-one}.

Assume that the theorem holds  for ranks smaller than $k\geqslant
2$. Assume that $G$ has rank $k$. Let $\{g_1,...,g_k\}$ be a set of
generators of $G$. Let $F$ be the subgroup generated by $g_1$ and
let $H$ be the subgroup generated by $\{g_2,...,g_k\}$. Then by
Lemma \ref{lem-rank-one}, the action of $F$ on $Z$ has a periodic
point.   By replacing $g_1$ with some positive power of it and $G$
by some subgroup of finite index if necessary, we may assume that
the action of $F$ on $Z$ has a fixed-point.
In particular, the set $Z^{F}$  is not empty. We note that $Z^{F}$ is  also a  closed subscheme of
$\P^m$. Moreover, it is stable under the actions of $H$ since $G$  is
an abelian group. By induction hypothesis, the action of $H$ on
$Z^{F}$ has a periodic point $z$. Then $z$ is a periodic point for
the action of $G$ on $Z$. This completes the proof.
\end{proof}

\subsection{Proof of Theorem \ref{thm-section2} }

We will finish the proof of Theorem \ref{thm-section2}  in this subsection. We will need the following two lemmas.

\begin{lemma}
\label{lem-corres-fixed-codim-1-sub} Let $B$ be a smooth projective
variety and let $\pi: \widetilde{B}\to B$ be the universal cover
with Galois group $G=\pi_1(B)$.  Let $V$ be a linear representation of
$G$ and let $E$ be the corresponding  flat vector bundle over $B$.
Then there is a one-to-one correspondence between the set of
$G$-fixed-points $y\in \mathrm{Proj}\, V$ and the
set of codimension  one  flat subbundles $F \to E$.
\end{lemma}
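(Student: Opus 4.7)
The plan is to identify everything with data for the fundamental group $G$ via the standard equivalence between flat vector bundles on $B$ and finite-dimensional linear representations of $G=\pi_1(B)$. Concretely, the flat bundle $E$ associated with $V$ is realized as the quotient $\widetilde{B}\times^{G} V$, with fiber canonically $V$ and monodromy given by the representation.

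Under this dictionary, flat subbundles of $E$ correspond bijectively to $G$-invariant linear subspaces of $V$. In one direction, any $G$-stable subspace $W\subseteq V$ yields the flat subbundle $\widetilde{B}\times^{G} W\hookrightarrow \widetilde{B}\times^{G} V=E$; conversely, restricting a flat subbundle $F\hookrightarrow E$ to a fiber produces a subspace of $V$ that must be preserved by monodromy, hence $G$-invariant. These two constructions are mutually inverse, and restricting to the codimension-one case gives a bijection between codimension-one flat subbundles $F\subset E$ and $G$-stable hyperplanes $W\subset V$.

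To conclude, I would unwind the notational convention: throughout the paper $\mathrm{Proj}\, V$ denotes the projective space parametrizing hyperplanes in $V$, so its closed points are precisely hyperplanes $W\subset V$, and the induced $G$-action is $[W]\mapsto [g\cdot W]$. A point $[W]$ is then $G$-fixed if and only if $g\cdot W=W$ for every $g\in G$, i.e.\ exactly when $W$ is $G$-stable. Composing with the bijection established in the previous paragraph yields the desired one-to-one correspondence. I do not expect any genuine obstacle here: the statement is essentially a dictionary lemma, and once $E$ is written as $\widetilde{B}\times^{G} V$ each direction of the bijection is immediate. The only point requiring a brief verification is that the subbundle associated with a $G$-stable $W$ is indeed flat, which is automatic because the transition data for $E$ preserve $W$ by construction.
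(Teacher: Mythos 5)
Your proposal is correct and follows essentially the same route as the paper: the paper's proof likewise factors the correspondence through codimension-one subrepresentations $W\subseteq V$, matching $G$-fixed points of $\mathrm{Proj}\, V$ (the space of hyperplanes) with $G$-stable hyperplanes and these with codimension-one flat subbundles. You merely spell out the dictionary $E=\widetilde{B}\times^{G}V$ more explicitly, which the paper leaves implicit.
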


\begin{proof}
There is a one-to-one correspondence between the set of
$G$-fixed-points $y$  and the set of codimension one
subrepresentations $W\subseteq V$. Moreover, the set of  codimension
one subrepresentations $W\subseteq V$ is one-to-one correspondent to
the set of codimension  one  flat subbundles $F \to E$.
\end{proof}

\noindent In the next lemma, we consider the following situation.  Let $Y$ be a  projective variety and let $H$ be a very ample line bundle on $Y$. Let $B$ be a smooth
projective variety with fundamental group $G=\pi_1(B)$. Let
$\widetilde{B}\to B$ be the universal cover.  Assume that there is
an action of $G$ on $Y$ such that $H$ is $G$-equivariant.  Let $G$
act on $Y\times \widetilde{B}$ diagonally and let $X$ be the
quotient $(Y\times \widetilde{B})/G$. Then there is a natural
fibration $f:X\to B$, and $H$ descends to a line bundle $L$ on
$X$. Moreover,  the natural linear action of $G$ on $V=H^0(Y, H)$ induces a flat vector bundle structure on $E=f_*L$.

\begin{lemma}
\label{lem-corres-section} With the notation in the paragraph above, we assume that there
is a $G$-fixed-point  $y\in Y$. Then on the one hand, $y$ induces a section $\sigma: B\to X$ of $f$. On the other hand, $y$ also induces a  short exact sequence of flat vector bundles $$0\to F \to E \to Q\to 0 $$   such that  $Q \cong \sigma^* L$.
\end{lemma}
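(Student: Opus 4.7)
The plan is to establish the section and the short exact sequence separately, and then identify the quotient of the second with the pullback of $L$ by the first via an evaluation map.

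For the section: since $y\in Y$ is $G$-fixed, the subvariety $\{y\}\times\widetilde{B}\subset Y\times\widetilde{B}$ is stable under the diagonal action of $G$. Its image $\Sigma$ in the quotient $X=(Y\times\widetilde{B})/G$ is therefore a closed subvariety, and the restriction of $f$ to $\Sigma$ is identified with the quotient map $\widetilde{B}\to B$, hence an isomorphism. The inverse defines the desired section $\sigma:B\to X$ with $\sigma(B)=\Sigma$.

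For the exact sequence: the very ample line bundle $H$ furnishes a $G$-equivariant closed embedding $Y\hookrightarrow \mathrm{Proj}\, V$, under which a $G$-fixed point $y$ corresponds to a codimension one $G$-stable subspace $W\subset V$, namely the hyperplane of sections of $H$ vanishing at $y$. Taking flat bundles associated with the short exact sequence of $G$-representations
\[
0\to W\to V\to V/W\to 0
\]
(via the construction $E=(\widetilde{B}\times V)/G$) yields a short exact sequence of flat vector bundles
\[
0\to F\to E\to Q\to 0
\]
on $B$, where $E\cong f_*L$ by hypothesis and $Q$ is a flat line bundle coming from the one-dimensional $G$-representation $V/W$.

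It remains to identify $Q$ with $\sigma^*L$. The natural restriction along $\sigma$ defines a morphism of coherent sheaves $f_*L\to \sigma^*L$ on $B$. Using the product description of $X$ and the compatibility of $L$ with $H$, one checks that on each fiber this morphism is exactly the evaluation at $y$ of sections of $H$, whose kernel is the hyperplane $W$. Hence the composite $E\to\sigma^*L$ has kernel $F$ and factors through a surjection $Q\to\sigma^*L$ of line bundles, which is therefore an isomorphism.

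The main obstacle is the last step, where one must verify that the flat bundle structure on $E=f_*L$ supplied by the $G$-representation $V$ agrees with its structure as a direct image sheaf, so that the algebraically defined quotient $E\to Q$ coincides with the geometric restriction map $f_*L\to\sigma^*L$. This is essentially a bookkeeping exercise using the explicit presentation $X=(Y\times\widetilde{B})/G$, but it is the only place where the two descriptions of $E$ need to be matched.
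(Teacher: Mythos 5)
Your proposal is correct and follows the same skeleton as the paper: the section is obtained as the quotient $(\{y\}\times \widetilde{B})/G$ (exactly the description the paper gives in the remark following the lemma), and the flat exact sequence comes from the $G$-stable hyperplane $W\subset V$ attached to the fixed point, i.e.\ from Lemma \ref{lem-corres-fixed-codim-1-sub}. The one place you diverge is the identification $Q\cong \sigma^*L$. You use the counit $f^*f_*L\to L$ pulled back along $\sigma$, argue that fibrewise it is evaluation at $y$ with kernel $W$, and then must check that the flat trivialization of $E=f_*L$ over $\widetilde{B}$ matches its direct-image description --- the ``bookkeeping'' you flag. The paper avoids this comparison by working inside $P=\mathrm{Proj}\, E$: the quotient $E\to Q$ gives a section $\mu:B\to P$ with $\mu^*\sO_E(1)\cong Q$ by the universal property of the projective bundle; since $\mu(b)$ corresponds to $y\in Y$ for every $b$, the section $\mu$ factors through the closed embedding $X\hookrightarrow P$, and $\sO_E(1)|_X\cong L$ then yields $Q\cong\sigma^*L$ and the section in one stroke. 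Both arguments ultimately rest on the same compatibility (that the embedding $X\hookrightarrow\mathrm{Proj}\,E$ descends from $Y\times\widetilde{B}\hookrightarrow(\mathrm{Proj}\,V)\times\widetilde{B}$), so your route is sound; the paper's is just slightly more economical because the universal property packages the evaluation-map computation for you.
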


\begin{proof}
Let $P=\mathrm{Proj}\, E$ and let $p:P\to B$ be the natural
projection. We note that there is a $G$-equivariant embedding $Y\to
\mathrm{Proj}\, V$, which induces  a closed embedding $X\to P$.
Moreover, we have $\sO_E(1)|_X \cong L$.

Since   $y\in Y \subseteq \mathrm{Proj}\, V$ is a  $G$-fixed-point,
it  corresponds to a codimension one flat subbundle $F\to E$ by
Lemma \ref{lem-corres-fixed-codim-1-sub}. Let $Q$ be the quotient
$E/F$. Then the quotient map $E\to Q$ induces a section $\mu: B\to
P$ of $p$ such that $\mu^*\sO_E(1)\cong Q$.  Moreover, for every
$b\in B$, $\mu(b)\in P_b$ corresponds exactly to $y\in
\mathrm{Proj}\, V$. Since $y\in Y$, the section $\mu$ factors
through a section   $\sigma: B\to X$ of $f$. Since $\sO_E(1)|_X =
L$, we have $Q \cong \sigma^* L$.  This completes the proof of the
lemma.
\end{proof}

\begin{rem} We note that the section $\sigma:B\to X$ in the lemma above depends only on the fixed-point $y$, and is independent of the choice of the line bundle $H$. Indeed, $\sigma(B)$ is the quotient $(\{y\}\times \widetilde{B}) / G$.
\end{rem}


\noindent  Now we can prove Theorem \ref{thm-section2}.

\begin{proof}[{Proof of Theorem \ref{thm-section2}}] As in
\cite[Corollary 4.16]{Cao16}, by replacing $X$ with some finite
\'etale cover, we may assume that the fibers of $f$ are simply
connected.  If $A$ is a point or if $f$ is an isomorphism, then
there is nothing to prove. Assume that $A$ has positive dimension
and that $f$ is not an isomorphism.  Let $L$ be an $f$-relatively
very ample divisor and let $E=f_*L$.  Assume that $\mathrm{rank}\,
E=m+1$ with $m\geqslant 0$ . There is an isogeny $p:A'\to A$ such
that $p^*(\mathrm{det}\, E)$ is divisible by $m+1$. Since we have
assumed that the fibers of $f$ are simply connected, the natural
morphism $X\times_A A' \to A'$ is still the Albanese morphism.
Hence, by replacing $X$ with $X\times_A A'$, we may assume that $
\mathrm{det}\, E = N^{m+1}$ for some line bundle $N$.  By replacing
$L$ with $L-f^*N$, we may then assume that $\mathrm{det}\, E  $ is
trivial.

Let $\pi: \widetilde{A} \to A $ be the universal cover with Galois
group $G=\pi_1(A)$.
Let $\widetilde{X}$ be the fiber product $X\times_A \widetilde{A}$ and
let $p:\widetilde{X} \to X$ be the natural morphism. By
\cite[Lemma 4.15]{Cao16}, $f_*(kL)$ is a numerically flat vector
bundle on $A$ for any positive integer $k$. In particular, $E$ is
numerically flat and hence is a flat vector bundle (see \cite[Lemma 6.5 and Corollary 6.6]{Ver04}
). Let $(V, \rho)$ be the corresponding representation of
$G$. Then there is a $G$-equivariant isomorphism $(\mathrm{Proj}\,
E) \times_A \widetilde{A} \cong (\mathrm{Proj}\, V)   \times
\widetilde{A}$, where the action of $G$ on $\mathrm{Proj}\, V$ is
the one induced by $\rho$. By \cite[Proposition 2.8]{Cao16}, there
is a $G$-stable subvariety $Y\subseteq  \mathrm{Proj}\, V$ such
that there is a $G$-equivariant isomorphism $\widetilde{X}\cong Y
\times \widetilde{A}$ which makes the following   diagram commute

\centerline{ \xymatrix{
Y \times \widetilde{A} \ar[d] \ar[r] &  (\mathrm{Proj}\, V)  \times \widetilde{A}  \ar[d]\\
X \ar[r]           &   \mathrm{Proj}\, E } } \noindent Let  $H=
\sO_{\mathrm{Proj}\, V}(1)|_Y$.  Then, on $\widetilde{X}$, we have  $p^*L\cong pr_1^* H $, where
$pr_1:\widetilde{X} \to Y$ is the natural projection induced by the
isomorphism $\widetilde{X} \cong Y \times \widetilde{A}$. After all,  we have $X\cong (Y\times \widetilde{A})/G$, and we are in the same situation as  in Lemma \ref{lem-corres-section}.

We note that $G$ is an abelian group. By Theorem
\ref{thm-periodic-points},  the action of $G$ on $Y$ has a periodic
point for $H$ is a $G$-equivariant ample line bundle. Hence there is
a subgroup $G'$ of $G$ of finite index such that the action of $G'$
on $Y$ has a fixed-point $y\in Y$.  The quotient  $\widetilde{X}/G'$
is a finite \'etale cover of $X$, and the natural morphism
$\widetilde{X}/G' \to \widetilde{A}/G'$ is the Albanese morphism for
we have assumed that the fibers of $f$ are simply connected. Hence,
by replacing $X$ with  $\widetilde{X}/G'$ if necessary, we may
assume that the action of $G$ on $Y$ has a fixed-point $y\in Y$.
This fixed-point induces a section $\sigma:A\to X$ of the Albanese
morphism $f$ by Lemma \ref{lem-corres-section}.

We note that the anticanonical bundle $\omega_Y^{-1}$ of $Y$ is nef for $\omega_X^{-1}$ is nef.
Moreover, $\omega_Y^{-1}$ is canonically $G$-equivariant.  Hence  $\omega_X^{-a} \otimes H$
is   a $G$-equivariant ample line bundle for any positive integer $a$. On $\widetilde{X}$, we  have  $$p^*(\omega_{X/A}^{-a} \otimes L) \cong
\omega_{\widetilde{X}/\widetilde{A}}^{-a} \otimes p^*L \cong  pr_1^*(\omega_Y^{-a} \otimes H).$$
Let $r_a$ be some large enough positive integer such that
$(\omega_Y^{-a} \otimes H)^{r_a}$ is very ample.   Then there is a natural linear action of $G$ on  $H^0(Y, (\omega_Y^{-a} \otimes H)^{r_a})$, which induces a flat vector bundle structure on  $E_a=  f_ *( \omega_{ {X}/ {A}}^{-a} \otimes L)^{r_a} $. By Lemma
\ref{lem-corres-section},   the $G$-fixed-point $y$ induces a short exact
sequence of flat vector bundles  $$0\to F_a\to E_a\to Q_a\to 0,$$
 such that $Q_a\cong \sigma^*  (\omega_{X/A}^{-a} \otimes L)^{r_a} $.  Since   $Q_a$ is flat, it is
numerically trivial. Thus $\sigma^*(\omega_{ {X}/ {A}}^{-a} \otimes L)$ is numerically
trivial. Since this is true for all positive integer $a$, we obtain
that $\sigma^*  \omega_{ {X}/ {A}} $ is numerically trivial. This completes the
proof of the theorem for $\omega_A$ is trivial.

\end{proof}


\section{Projective manifolds with strictly nef tangent bundles}

\noindent In this section, we will  prove Theorem \ref{main11}, Theorem
\ref{main2} and Theorem \ref{main3}. At first, we observe the
following fact.

\blemma \label{lem-product-non-snef} Let $X=Y\times Z$ be a variety of dimension $n$
which is a
product of two smooth projective varieties of positive dimensions.
 If  $\bigwedge^r T_X$ is strictly nef for
some $1\leqslant r \leqslant n$, then both $Y$ and $Z$ are uniruled. \elemma

\begin{proof}
We have $T_X\cong E \oplus F$, where $E$ and $F$ are the pullbacks
of the tangent bundles of $Y$ and $Z$ respectively. Then  we have $$\bigwedge^r T_X  \cong \bigoplus_{a+b=r} \bigwedge^a E
\otimes \bigwedge^b F.$$

Let $s$ be the dimension of $Y$. We will first show that  $r> s$.  Suppose by  contradiction that $r\leqslant s$. On the one hand, $\bigwedge^r E$ is a direct summand of $\bigwedge^r
T_X$. On the other hand, for any $y\in Y$, the restriction of $\bigwedge^r E$ on the fiber $X_y$ is trivial. Hence, $(\bigwedge^r
T_X)|_{X_y}$ cannot be strictly nef by Proposition \ref{directsum}.  This is a contradiction.

As a consequence, we obtain that  $N=\bigwedge^s E \otimes \bigwedge^{r-s} F$ is a direct
summand of $\bigwedge^r T_X$. In particular, $N$ is strictly nef by
Proposition \ref{directsum}.  Let  $z$ be a point in $Z$. Since   $(\bigwedge^{r-s} F)|_{X_z}$ is trivial and since $N|_{X_z}$ is still
strictly nef, we conclude that $(\bigwedge^s E)|_{X_z}$  is strictly
nef.  This implies that $\omega_Y^{-1}$ is strictly nef for $X_z\cong Y$.  Hence $Y$ is uniruled by
\cite[Corollary~2]{MM86}.
By symmetry, we can also show that $Z$ is uniruled.
\end{proof}

\noindent Now we are ready to prove Theorem \ref{main11}.

\begin{proof}[{Proof of Theorem \ref{main11}}]
We note that $\omega^{-1}_X$ is nef. We will first show that the augmented irregularity $\tilde{q}(X)$ is
zero. Assume the opposite. By replacing $X$ with some finite \'etale
cover if necessary, we may assume that the irregularity $q(X)$ is
equal to $\tilde{q}(X)>0$. Let $f:X\to A$ be the Albanese morphism.
Then $\mathrm{dim}\, A=q(X) >0$. Thanks to Theorem
\ref{thm-section2}, by  replacing $X$ with some finite \'etale cover if necessary,
we may assume that there is a section $\sigma:A\to X$ such that
$\sigma^* \omega_X$ is numerically trivial. On the one hand, we remark that $\mathrm{det}\, (\sigma^* \bigwedge^r T_X) \cong  \sigma^*\omega_X^{- t}$ is  numerically trivial, where $t$ is the binomial number $\binom{n-1}{r-1}$. One the other hand, since $\sigma^* \bigwedge^r T_X $ is  strictly nef, $\mathrm{det}\, (\sigma^* \bigwedge^r T_X)$ cannot be numerically trivial  by Lemma  \ref{flat}. We obtain a contradiction.

Therefore, we have $\tilde{q}(X)=0$. In particular, $X$ has finite
fundamental group. Let $\widetilde{X}\to X$ be the universal
cover. Then by \cite[Theorem 1.2]{CH17}, we have $\widetilde{X}\cong
Y\times F$ such that $\omega_Y$ is trivial and that $F$ is rationally
connected. By Lemma \ref{lem-product-non-snef}, $Y$ must be a point for  $\bigwedge^r T_ {\widetilde{X}} $ is strictly nef. Hence $\widetilde{X}$ is rationally connected and so is $X$.
\end{proof}

\noindent The following corollary is a direct consequence of Theorem \ref{main11}.

 \bcorollary\label{YXR} Let $f:Y\>X$ be
a smooth surjective morphism between projective manifolds. If $\omega_Y^{-1}$
is strictly nef, then $X$ is rationally connected. \ecorollary

\noindent  In the following, we will present two applications of Theorem
 \ref{main11} on characterizations of projective spaces and quadrics.  We will  prove Theorem \ref{main2} and Theorem \ref{main3} successively.

\begin{proof}[{Proof  of Theorem \ref{main2}}]
By Theorem \ref{main11} and the structure theorem
for  smooth projective varieties with nef tangent bundles (see \cite[Main Theorem]{DPS94}), we deduce that $X$ is a
Fano variety. For any rational curve $f:\P^1\>X$, the bundle $f^*T_X$  is ample by Proposition \ref{cri0}.  Moreover, since there is a non-zero morphism from $T_{\P^1}\cong \sO_{\P^1}(2)$ to $f^*T_X$,  we obtain that $\deg\, f^*\omega_X^{-1}  \geq n+1$.  Hence $X\cong \P^n$ by \cite[Corollary 0.3]{CMSB02}.
\end{proof}

\begin{proof}[{Proof of Theorem \ref{main3}}]

We know that  $X$ is rationally connected from Theorem \ref{main11}.  For a rational curve $f:\P^1 \> X$, we can write $$f^*T_X  \cong   \Bigg(\bigoplus_{a_i>0}\mathcal{O}_{\P^1}(a_i)\Bigg)  \bigoplus  \Bigg(\bigoplus_{b_j\leq0}\mathcal{O}_{\P^1}(b_j)  \Bigg).$$
Since $\bigwedge^2 T_X$ is  strictly nef, so is $f^*(\bigwedge^2 T_X)$ by Proposition \ref{cri0}. Hence   $\sharp\{b_j\}\le1.$ If  $\sharp\{b_j\}$ is $0,$ then $f^*T_X$ is ample, and   $\deg\, f^*\omega_X^{-1}\ge n+1$ by the same argument as in the proof of Theorem \ref{main2}.

If  $\sharp\{b_j\}$ is $1,$ then we can assume that  $$
f^*T_X \cong \Bigg(\bigoplus_{i=1}^{n-1}\mathcal{O}_{\P^1}(a_i)\Bigg) \bigoplus  \mathcal{O}_{\P^1}(-c)$$
 with $0<a_1\leq a_2 \leq \cdots \leq a_{n-1} $ and $c\ge 0$. Since $f^*(\bigwedge^2 T_X)$ is strictly nef, we must have  $a_1-c>0$.  Moreover, since there is a natural non-zero morphism  from $T_{\P^1}$ to  $f^*T_X$, there exists some $i$ such that $a_i\geq 2$. If $c>0$, then $a_1$ is at least $2$  and  we have  $$ \deg\, f^*\omega_X^{-1} =(a_1-c)+a_2+\cdots +a_{n-1}\geq
 1+2(n-2)=2n-3 \geq n. $$  If  $c=0$,  we also have $$ \deg\, f^*\omega_X^{-1} = a_1 +a_2+\cdots +a_{n-1}\geq
  (n-2) + 2= n. $$

After all, we always have $\deg\, f^*\omega_X^{-1}\ge n$. By \cite[Corollary~D]{DH}, $X$ is isomorphic to $\P^n$, or a quadric $\Q^n$, or a projective bundle  over some smooth curve. It remains to rule out the case of projective bundles. Assume by contradiction that $X\cong \mathrm{Proj}\,V$, where   $V$ is a vector bundle over a smooth  projective curve $B$.   We note that  $B$ is isomorphic to $\P^1$ for  $X$ is rationally connected.  We may assume that $V=\bigoplus_{i=1}^n \sO_{B}(d_i)$  with $0=d_1\le d_2 \le \cdots \le d_n$.  If  $\pi:X\to B$ is the natural projection, then we have
\begin{eqnarray*}
\omega_X^{-1} &= &(\pi^* \omega_{B}^{-1}) \otimes \omega_{X/B}^{-1} \\
 &\cong&  (\pi^*\sO_{B}(2)) \otimes (\sO_{V}(n) \otimes \pi^*( \det  V)^{-1}) \\
 &=&\sO_{V}(n ) \otimes \left(\pi^* \sO_{B}\left(2-\sum_{i=1}^n d_i\right)\right).
\end{eqnarray*}
  We note that the quotient morphism  $V\to \sO_{B}(d_1)$  induces a section $\sigma:B \to X$ of $\pi$ such that $\sigma^*\sO_{V}(1)\cong \sO_{B}(d_1) = \sO_{B}.$
   Thus we have $$\sigma^* \omega_X^{-1}  \cong \sO_{B}\left(2-\sum_{i=1}^n d_i\right).$$ This shows that  $\deg\,  \sigma^* \omega_{X}^{-1}  \le 2 <n$, which is  a contradiction.
\end{proof}

\begin{rem}
We note that if  a vector bundle $E$ is stictly nef, then  $\det\, E$ is not necessarily  strictly nef in general  (see \cite[Section 10 in  Chapter I]{Har70}). However,  inspired by Theorem \ref{main2} and Theorem \ref{main3}, we expect  that if $\bigwedge^r T_X$ is strictly nef for some $r>0$, then so is $-K_X$.  We then extend the conjecture of Campana and Peternell: if $\bigwedge^r T_X$ is strictly nef for some $r>0$, then $X$ is a Fano variety.
\end{rem}

\end{document}